\newtheorem{theorem}{Theorem}
\newtheorem{lemma}[theorem]{Lemma}
\newtheorem{remark}[theorem]{Remark}
\newcommand{\rset}{\mathbf{R}}
\newcommand{\BigO}{\mathcal{O}}
\newcommand{\diag}{\text{diag}}
\providecommand{\norm}[1]{\lVert#1\rVert}
\newcommand{\be}{\begin{equation}}
\newcommand{\ee}{\end{equation}}
\newcommand{\bt}{\begin{tabular}}
\newcommand{\et}{\end{tabular}}
\newcommand{\bc}{\begin{center}}
\newcommand{\ec}{\end{center}}
\begin{document}

\title[Learning Explicitly Conditioned Sparsifying Transforms ]{Learning Explicitly Conditioned Sparsifying Transforms }

\author*{\fnm{Andrei} \sur{Pătrașcu}}\email{andrei.patrascu@fmi.unibuc.ro}

\author{\fnm{Cristian} \sur{Rusu}}\email{cristian.rusu@fmi.unibuc.ro}

\author{\fnm{Paul} \sur{Irofti}}\email{paul@irofti.net}

\affil{\orgdiv{Research Center for Logic, Optimization and Security (LOS), Department of Computer Science, Faculty of Mathematics and Computer Science}, \orgname{University of Bucharest}, \orgaddress{\street{Academiei 14}, \city{Bucharest}, \postcode{010014}, \state{Romania} }}

\abstract{ 
Sparsifying transforms became in the last decades widely known tools for finding structured sparse representations of signals in certain transform domains.
Despite the popularity of classical transforms such as DCT and Wavelet, learning optimal transforms that guarantee good representations of data into the sparse domain has been recently analyzed in a series of papers.
Typically, the conditioning number and representation ability are complementary key features of learning square transforms that may not be explicitly controlled in a given optimization model. Unlike the existing approaches from the literature, in our paper, we consider a new sparsifying transform model that enforces explicit control over the data representation quality and the condition number of the learned transforms. We confirm through numerical experiments that our model presents better numerical behavior than the state-of-the-art.   
}

\keywords{conditioning number, alternating minimization, transform learning, sparse coding}

\maketitle

\section{Introduction}

In the context of signal processing and machine learning, there are many data-driven numerical algorithms that construct linear transformations satisfying desirable properties. In the past 15 years, chief among these properties is the ability to sparsely represent data, especially image data. These sparsifying transformations have multiple applications such as image denoising, inpainting, compressed sensing and structured dictionary learning \cite{EladAharon06_denoising,RavBre:12,Sha:21,RD13_SBO,dl_book}.

A major technical issue related to these learnt transformations is their numerical conditioning. For example, methods that use these transformations in image processing are numerically stable only when they enjoy good numerical properties, i.e., low (or at least bounded and controllable) condition numbers.
Many statistical models such as the linear (quadratic) discriminant analysis and principal component analysis, use the covariance matrix as a fundamental tool for essential information extraction from high-dimensional data. However, often when the data dimension exceeds the number of observations the sample covariance matrix becomes poorly conditioned or singular even in simple cases \cite{Wan:21}. One remedy is, instead of using the original sample covariance, to use the nearest positive-definite approximation of the original matrix, which has been commonly considered in many contexts (see \cite{Li:20, Wan:21} and the references therein). This approximation formulates as the solution of a Frobenius norm projection onto the set of well-conditioned positive-definite matrices, which proved to allow efficient computational schemes based on singular value decomposition (SVD). Thus, in \cite{Li:20}, the main matrix projection operation was reduced to a simple vector projection problem onto an intersection between box constraints and linear hyperplanes. Although a closed-form solution could not be derived, an optimal linear time algorithm was presented.

A more general formulation of the previously mentioned projection problem was considered in \cite{Ada13}, where a Procrustes objective function is minimized over the same well-conditioned positive-definite matrices set. In order to avoid projection operation, the author introduces an alternating minimization scheme by explicitly optimizing the matrix components of the SVD factorization. Other higher generalizations of the Procrustes problem have been considered in \cite{Ful:23,Chu:01,Chu:98,Fra:14}. In many of these cases, including the original Procrustes problem, the solutions are based on polar decompositions of some matrix products.



Among the first sparsifying transform learning techniques we mention \cite{RavBre:12,RavWen:15alg,RavWen:15conv}, well-conditioned transforms are computed through direct penalization costs. The penalty strategy maintains a trade-off between the conditioning and representation quality using additional penalty functions to control the condition number of the optimal transform. Often, this optimal conditioning is invisibly dependent on the penalty parameters, which may ask for fine-tuning in some cases. We will bring more details about this approach in the next section. Since its introduction in \cite{RavBre:12}, the approach has gained popularity and has been extended to include: convergence analysis \cite{RavWen:15conv, RB15_SparseTransoformDenoising}, an online approach for streaming large datasets \cite{RavWen:15alg, RavWen:15conv}, a double-sparse approach for image processing \cite{RB13_DoubleSparseImages} etc. To further boost the performance of these methods, recent work was done to extend sparsifying transforms to include kernel methods \cite{MAGGU2023141}. The work in \cite{Sha:21} uses the conceptual idea of transform learning to bridge the gap between state-space models and neural network approaches by introducing sequential transform learning techniques. In \cite{Kumar23_TLSubspaceInterpolation}
the authors employ multiple transform learning steps,
where at each step
a new transform is learned together with its sparse representations
through the closed-form solutions proposed by \cite{RavBre:12}.
The input signals at each step
are the residuals from the previous step
and
the process is applied until an error threshold is reached. BLORC~\cite{Ghosh24_TLBilevelOptimization} represents an alternative
to the closed-form transform learning approach
which employs an online gradient-descent based method
where the signals are processed in mini-batches
throughout multiple epochs.


\vspace{0.5cm}

\noindent \textbf{Contribution.} In this paper, we propose a sparsifying transform learning method that, unlike previously proposed methods from the literature, explicitly controls the condition number of the learnt transform at every step of the algorithm.
Furthermore,
the proposed method is numerically efficient,
fast,
has no hyper-parameters to choose or tune
and, as we will show experimentally, provides state-of-the-art results on image data.

\vspace{10pt}

\noindent \textbf{Notation.}
Let $a_{ij}$ be the element, $a_i$ the $i$-th column and $a^j$ the $j$-th line of matrix $A$.
We denote the scalar Euclidean product $\langle x, y \rangle := x^Ty$, $\norm{A}_F=\sqrt{\sum_{i,j} a_{ij}^2}$ the Frobenius norm,
with $\norm{a}_2$ the $\ell_2$ vector norm,
and with $\ell_0$ the pseudo-norm $\norm{a}_{0}$
counting the number of nonzero elements.
We also denote $[x]_+ = \max\{0, x\}$.
We define $\odot$ as the elementwise vector multiplication
$a \odot b = \{c \mid a_i \times b_i \;\; \forall i\}$
and $\oslash$ is the elementwise vector division 
$a \oslash b = \{c \mid a_i / b_i \;\; \forall i \;\; \text{where} \;\; b_i \neq 0\}$.
The set of $n$-dimensional orthogonal matrices is denoted by $O_n$. For a given matrix $A$ we define the condition number of $A$ as $\kappa(A) = \frac{\sigma_{\max}}{\sigma_{\min}}$, where $\sigma_{\max}$ and $\sigma_{\min}$ are the largest and smallest, respectively, singular values of $A$.

\noindent \textbf{Paper structure.} 
We proceed in Section~\ref{sec:transforms}
to introduce a new formulation for well-conditioned transforms that
explicitly constrains the condition number of the transform we learn.
To our knowledge, this is proposed and solved for the first time in the literature.
In Section \ref{sec:algorithm} we describe an alternating optimization procedure to compute well-conditioned transformations for sparse representations.
Section~\ref{sec:spectrum_projection} analyzes and describes in detail the efficient numerical optimization step that updates the spectrum subjected to the explicit condition number constraint. In Section~\ref{sec:experiments}, we give experimental numerical evidence, on both synthetic and real datasets, to support the effectiveness of the method and to showcase how well it compares with previous state-of-the-art algorithms from the literature.
We conclude the paper in Section~\ref{sec:conclusion}.

\section{Well-conditioned transforms}
\label{sec:transforms}

Let the data $Y \in \rset^{n \times m}$, the sparsifying Transform Learning problem \cite{RavBre:12} seeks the optimal orthogonal transform that maps the data near to a set of sparse vectors. For this purpose, the following minimization problem has to be solved:
\begin{align}\label{ST_PD_model}
\min\limits_{W \in \rset^{n \times n}, X \in \rset^{n 
\times m} } \;\;&  \norm{X - WY}^2_F \\  
\text{subject to\quad\quad} & \; W^TW = I_n, \norm{X_i}_0 \le s \quad \forall i, \nonumber
\end{align}
where $W$ is the orthogonal matrix that represents $Y$ close to  $X$. Also, $s$ is the chosen sparsity level. On short, the solution pair $(X^*,W^*)$ aims to attain a minimal residual $\norm{X^* - W^* Y}$ over the orthogonal transforms $W$ and sparse mappings $X$. Often there is no orthogonal $W^*$ that guarantees a perfect representation $W^*Y = X^*$ or a low residual. Therefore, accurate representations are motivated in several applications such as compression, denoising and hyperspectral imaging \cite{EladAharon06_denoising,RavBre:12,Sha:21}. A relaxation of the orthogonality constraints for this purpose seems the most natural way to obtain lower factorization errors. At the same time, an ill-conditioned transform burdens the data reconstruction from the sparse codes. Overall, there is a trade-off between the two complementary aspects: \textit{sparsifying error} and \textit{transform conditioning}. It has been approached in various ways in the literature.

In the series of papers \cite{RavBre:12,RavWen:15alg,RavWen:15conv}, the authors use a specific barrier function in order to remove the orthogonality constraints, which yields the following model:
\begin{align}\label{Bressler_problem}
\min\limits_{W \in \rset^{n \times n}, X \in \rset^{n 
\times m} } \;\;&  \norm{X - WY}^2_F - \mu \log|\det{W}| + \frac{\rho}{2}\norm{W}^2_F \\ 
\text{subject to\quad\quad} & \; \norm{X_i}_0 \le s \quad \forall i. \nonumber
\end{align}
Although $\log\det(\cdot)$ is a well-known barrier function used to preserve feasibility in the positive semidefinite matrix cone, here it is used to keep the singular values of $W$ away from $0$. At the same time, the second penalty is used to bound from above the squared singular values of the transform. By variation of the penalty parameters $\mu$ and $\rho$ one gains an indirect and ``blind" control over the condition number of the optimal transform $W$. In \cite[Proposition 1]{RavBre:12} we are provided with an upper bound on the conditioning number $\kappa$ of the optimal transform $W^*$ as: 
\begin{align*}
\kappa \le e^{r} + \sqrt{e^{r} - 1} \;\; \text{where} \;\; r = -\log\det{(W^*)} + c\norm{W^*}^2_F - \frac{n}{2} - \frac{n}{2}\log(2c).
\end{align*}
Under specific conditions, it can be proved that the solution $W^*$ may result orthogonal, but in general, it is not clear how to use the previous bound for finding particular parameter values that guarantee explicit conditioning. In our model, we search for the best transform of explicit fixed conditioning that minimizes the representation error.

For this purpose, we aim to fully and explicitly control the conditioning number $\kappa(W)$ using a natural parameterized relaxation of the orthogonality constraints in \eqref{ST_PD_model} as:  let parameter $\rho \ge 1$
\begin{align} \label{approx_relaxed_model}
\min\limits_{W \in \rset^{n \times n}, X \in \rset^{n 
\times m} } \;\;&  \norm{X - WY}^2_F \\
\text{subject to\quad\quad} & \;\; \kappa(W)\le  \rho, \; \norm{X_i}_0 \le s \quad \forall i. \nonumber
\end{align}
On one hand, when $\rho = 1$ the new feasible set from \eqref{approx_relaxed_model} reduces to the original one of \eqref{ST_PD_model}. On the other,  a sufficiently large $\rho$ enlarges the feasible set to the extent that it includes the orthogonal solution of \eqref{ST_PD_model}, the optimal transform of \eqref{Bressler_problem} and even the unconstrained least squares solution. In particular, let $X$ be fixed and denote $W_\text{LS} = \underset{W}{\arg\min} \norm{WY-X}^2_F$, by allowing $\rho \ge \kappa(W_\text{LS}) $ the optimal transform solution of \eqref{approx_relaxed_model} is equal to $W_\text{LS}$. 
Intuitively, the optimal transform that globally solves \eqref{approx_relaxed_model} may attain lower residual values than the penalty-based models, while simultaneously keeping the desired direct conditioning restrictions. In this sense, the parameter $\rho$ gives some direct control over the trade-off between the sparsifying power and the conditioning of the optimal transform. 

A drawback of \eqref{ST_PD_model} consists of having a trivial global solution in $(0,0)$, i.e., both $W$ and $X$ are zero matrices of appropriate sizes. In order to avoid trivial local minima, the authors of \cite{RavBre:12} used a logarithmic barrier that keeps the spectrum of $W$ far from the origin. Also, to measure the trade-off between representation quality and norm of the transform spectrum, they use the ratio $\frac{\norm{X - WY}_F}{\norm{W}_F}$ as a performance indicator in the numerical tests. Motivated by these same issues, we impose an additional parameterized constraint that keeps the norm $\norm{W}_F$ constant, which leads to: 
\begin{align} \label{approx_relaxed_constnorm_model}
\min\limits_{W \in \rset^{n \times n}, X \in \rset^{n 
\times m} } \;\;&  \norm{X - WY}^2_F \\
\text{subject to\quad\quad} & \;\; \kappa(W)\le  \rho, \; \norm{W}_F = \tau, \nonumber\\
& \;\; \norm{X_i}_0 \le s \quad \forall i  \nonumber. 
\end{align}

Note that each new constraint in our proposed problem is equivalent to a regularization term in \eqref{Bressler_problem}. The highly nonconvex nature makes the model \eqref{approx_relaxed_constnorm_model} hard to solve globally in both variables $(W,X)$ simultaneously. However one can easily observe that $\arg\min_{z: \norm{z}_0 \le s} \; \norm{z-x}^2_2$ is the sparse vector that contains on nonzero entries the largest (in magnitude) $s$ components of $x$. Based on this fact, the projection subproblem in the variable $X$ has an explicit solution. 
 


 Although $\kappa(W)\le  \rho$ seems a hard nonsmooth nonconvex constraint, we further provide equivalent formulations based on SVD decomposition that leads to more optimistic viewpoints. 
 Like most previous works have done \cite{RavBre:12,RavWen:15alg,RavWen:15conv}, we develop Alternating Minimization schemes that could eliminate some of the constraints and difficulties. Since the condition number constraints impose limitations only over the spectrum of $W$ we naturally change our decision variable $W$ into the SVD triplet $(U,\Sigma,V)$ and obtain:
\begin{align*}
\min\limits_{U, \Sigma, V \in \rset^{n \times n}, X \in \rset^{n \times m}} &  \; F(U,\Sigma, V, X):=\norm{U\Sigma V^T Y - X}_F^2 \\ 
\text{subject to\quad\quad} \;\; &  U, V \in O_n, \; \kappa(\Sigma) \le \rho, \; \| \Sigma \|_F = \tau, \; \norm{X_i}_0 \le s. 
\end{align*}
Although the residual function $F$ is nonconvex, the new separable constraints are favorable for algorithms that use at each step blockwise minimization over a single matrix variable.  

\section{Proposed alternating minimization algorithm}
\label{sec:algorithm}

The Alternating Minimization (AM) method is a widely known algorithmic technique that, unlike common first-order methods which devise recurrent updates over all dimensions of the search variable, relies on each step choosing a small number of variable coordinates (or blocks) and minimizing a model of the objective function over the chosen block. Common rules for choosing the blocks of coordinates are cyclic, randomized, or greedy. In our case, the cyclic exact AM iteration reduces to: given the current $(U^t, \Sigma^t, V^t, X^t)$, at iteration $t+1$
\begin{align*}
U^{t+1} &:= \underset{U \in O_n}{\arg\min}\ F(U,\Sigma^t,V^t,X^t) \\
\Sigma^{t+1} &:= \underset{\sigma \in \rset^n, \kappa(\diag(\sigma)) \le \rho}{\arg\min}\ F(U^{t+1},\diag(\sigma),V^t,X^t) \\
V^{t+1} &:= \underset{V \in O_n}{\arg\min} \ F(U^{t+1},\Sigma^{t+1},V,X^t) \\
X^{t+1} &:= \underset{\norm{X_i}_0 \le s}{\arg\min} \ F(U^{t+1},\Sigma^{t+1},V^{t+1},X).
\end{align*}

\noindent A common property often shared by many alternating minimization schemes is that the objective function descends and the sequence limit points are stationary. 
\begin{lemma}
The Exact AM~(EAM) is a descent method 
\begin{align*}
F(U^{t+1}, \Sigma^{t+1}, V^{t+1}, X^{t+1}) \le F(U^t, \Sigma^t, V^t, X^t), \forall t \ge 0. 
\end{align*}
\end{lemma}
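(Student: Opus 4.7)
The plan is to establish the inequality as a telescoping chain of four one-block descent inequalities, one for each of the subproblems defining the EAM update. The underlying principle is standard: whenever a block is updated by exact minimization over its feasible set, the value of $F$ cannot increase, provided that the previous iterate lies in that feasible set.

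First, I would argue (by induction on $t$) that $(U^t,\Sigma^t,V^t,X^t)$ is feasible for the joint problem, and hence each of its components is feasible for the corresponding block subproblem: $U^t\in O_n$, the diagonal of $\Sigma^t$ satisfies $\kappa(\diag(\sigma^t))\le\rho$, $V^t\in O_n$, and each column of $X^t$ has at most $s$ nonzeros. The base case is handled by choosing a feasible initialization; the inductive step is immediate because each subproblem optimizes over exactly the constraint set that defines the corresponding block of the original feasible set.

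Second, I would apply the definition of each $\arg\min$ four times in order, using that the previous iterate is a feasible competitor for each subproblem:
\begin{align*}
F(U^{t+1},\Sigma^{t},V^{t},X^{t}) &\le F(U^{t},\Sigma^{t},V^{t},X^{t}),\\
F(U^{t+1},\Sigma^{t+1},V^{t},X^{t}) &\le F(U^{t+1},\Sigma^{t},V^{t},X^{t}),\\
F(U^{t+1},\Sigma^{t+1},V^{t+1},X^{t}) &\le F(U^{t+1},\Sigma^{t+1},V^{t},X^{t}),\\
F(U^{t+1},\Sigma^{t+1},V^{t+1},X^{t+1}) &\le F(U^{t+1},\Sigma^{t+1},V^{t+1},X^{t}).
\end{align*}
Chaining these four inequalities yields the claim.

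There is no genuine obstacle in this proof: it is a textbook descent property of exact alternating minimization, and the nonconvexity of the blocks (the orthogonal group, the conditioning constraint, the $\ell_0$ ball) plays no role beyond ensuring feasibility of the previous iterate. The only small bookkeeping issue is confirming that feasibility is preserved across iterations so that the previous block value can indeed be used as a feasible test point in each subproblem, which follows directly from the block-separable structure of the constraints.
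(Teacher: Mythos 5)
Your proof is correct and follows essentially the same route as the paper: the same telescoping chain of four one-block descent inequalities, each justified by the previous iterate being a feasible competitor for the corresponding exact subproblem. Your explicit feasibility-by-induction bookkeeping is a welcome addition that the paper leaves implicit.
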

\begin{proof}
Since $F$ is lower bounded and smooth, the first statement can be easily deduced from the EAM iteration:
\begin{align*}
F(U^{t+1}, \Sigma^{t+1}, V^{t+1}, X^{t+1}) 
& \le F(U^{t+1}, \Sigma^{t+1}, V^{t+1}, X^t) \\
& \le F(U^{t+1}, \Sigma^{t+1}, V^t, X^t) \\
& \le F(U^{t+1}, \Sigma^{t}, V^t, X^t) \\
& \le F(U^{t}, \Sigma^{t}, V^t, X^t), \; \forall t \ge 0,
\end{align*}
\end{proof}
\noindent However, a close inspection of the AM steps reveals some bottlenecks in the implementation of this scheme. We further make a careful analysis of each AM step:

\vspace{5pt}

\noindent $(i)$ Denoting $A^t = \Sigma^t (V^t)^T Y$, minimization over $U$ reduces to a classical Orthogonal Procrustes problem: 
\begin{align*}
\underset{U}{\min}\ \norm{UA^t - X^t}^2_F \;\; \text{subject to} \;\; U^TU = I_m.
\end{align*}

\vspace{5pt}

\noindent $(ii)$ The subproblem over $\Sigma$ reformulates as: 
\begin{align*}
\underset{\Sigma}{\min}\ \norm{\Sigma (V^t)^T Y - (U^{t+1})^TX^t}^2_F \;\; \text{subject to} \;\; \kappa(\Sigma) \le \rho.
\end{align*}
Based on the fact that $\Sigma = \diag(\sigma)$ is diagonal, we ignore for simplicity counter $t$ and derive:
\begin{align*}
 \| U^T X & -  \text{diag}(\mathbf{\sigma})V^T Y \|_F^2 
 = \sum\limits_{i=1}^n \norm{X^Tu_i - Y^Tv_i \sigma_i}^2_2 \\
& = \sum\limits_{i=1}^n \sigma_i^2 \norm{Y^Tv_i }^2_2 - 2 \sigma_i \langle Y^Tv_i, X^Tu_i \rangle + \norm{X^Tu_i}_2^2\\
& \overset{\tilde{\sigma}_i:= \sigma_i \norm{Y^Tv_i }}{=} \sum\limits_{i=1}^n \tilde{\sigma}_i^2 - 2 \tilde{\sigma}_i \frac{\langle Y^Tv_i, X^Tu_i \rangle}{\norm{Y^Tv_i }} + \norm{X^Tu_i}_2^2\\
& = \sum\limits_{i=1}^n \left(\tilde{\sigma}_i^2  - \frac{\langle Y^Tv_i, X^Tu_i \rangle}{\norm{Y^Tv_i }_2} \right)^2 + \norm{X^Tu_i}_2^2 - \left(\frac{\langle Y^Tv_i, X^Tu_i \rangle}{\norm{Y^Tv_i }_2} \right)^2,
\end{align*}
where we have used the change of variables $\tilde{\sigma_i} := \sigma_i \norm{Y^Tv_i }$. Finally, by denoting          $r_{i}  = \norm{Y^Tv_i}_2, D = \text{diag}(r), d_i = \frac{\langle Y^Tv_i, X^Tu_i \rangle}{\norm{Y^Tv_i }_2}$, we shortly conclude that:          
\begin{align}\label{projection_problem}
\underset{\sigma}{\arg\min}\ \norm{U^T X -  \text{diag}(\mathbf{\sigma})V^T Y}_F^2 
            = D^{-1} \cdot \underset{\tilde{\sigma}}{\arg\min}\ \norm{\tilde{\sigma}  - d }^2_2
            \text{ subject to }  \ell r_{i} \le \tilde{\sigma}_i \le r_{i}\kappa \ell.
\end{align}
This final projection problem seems much simpler than the initial form of the $\Sigma$ step. To evaluate its complexity, it will be extensively analyzed in Section \ref{sec:spectrum_projection}.

\vspace{5pt}

\noindent $(iii)$ The minimization over variable $V$ seems the most complicated step. It resembles a Weighted Orthogonal Procrustes problem (or a Penrose Regression problem) that in general does not have a closed-form solution \cite{Chu:01,Chu:98}, i.e. 
\begin{align*}
\min\limits_{V} \ \; \norm{\Sigma^{t+1} V^T Y - (U^{t+1})^TX^t}^2_F \;\; \text{subject to} \;\; V^TV = I_n.
\end{align*}
Previous works \cite{Chu:01,Chu:98,Ful:23} on these topics suggest the need for a specific sub-routine to approximate the optimal $V^{t+1}$, without global optimality guarantees. Our exact cyclic AM scheme requires such a routine to become tractable.   
Thus, instead of using the exact solution of the subproblem, the minimization of a simpler approximation of the objective seems much easier. For example, notice that
\begin{align*}
\norm{\Sigma V^T Y - U^TX}^2_F = \norm{\Sigma (V^T Y - \Sigma^{-1} U^TX)}^2_F \le \sigma_{\max}^2\norm{V^T Y - \Sigma^{-1} U^TX}^2_F.  
\end{align*}
The right-hand side becomes another Procrustes problem in the variable $V$. In the proposed algorithm we minimize this upper approximation, which is tractable.

\begin{algorithm}[h]
\caption{Alternating Minimization (AM) $(Y,\rho,\tau,T)$ }
\label{alg:rip}
		Initialize $\; U^0, \Sigma^0, V^0, X^0, k: = 0$\;
		\For{ $t = 1, \cdots, T$}{
            Use $U^{t+1} := \arg\min \; \norm{U \Sigma^t (V^t)^T Y - X^t}^2_F \;\; \text{s.t.} \;\;  U^TU = I_n$ and solve for $U^{t+1}$ by the closed-form formula for the polar decomposition of matrix $\Sigma^t (V^t)^T Y X^t$\;
            Update $\Sigma^{t+1} := \arg\min \; \norm{\Sigma (V^t)^T Y -  (U^{t+1})^T X^t}^2_F \;\; \text{s.t.} \;\;  \kappa(\Sigma)\le \rho$ by solving the 1D convex optimization problem \eqref{projection_problem} \;
            Update singular spectrum to obey Frobenius norm constraint $\Sigma^{t+1} := \tau \Sigma^{t+1} / \| \Sigma^{t+1} \|_F $ \;
            Use $V^{t+1} := \arg\min  \; \norm{Y^T V  - (X^t)^T U^{t+1} (\Sigma^{t+1})^{-1} }^2_F \;\; \text{s.t.} \;\;  V^TV = I_n$ and solve for $V^{t+1}$ by the closed-form formula for the polar decomposition of matrix $Y (X^{t})^T U^{t+1} (\Sigma^{t+1})^{-1} $ \;
	         Compute $W^{t+1} = U^{t+1} \Sigma^{t+1} (V^{t+1})^T$ \;
         Update sparse codes $X^{t+1} := H_s(W^{t+1}Y)$.
  }
\end{algorithm}

\noindent The AM algorithm alternates between solving minimization over the transform $W$ (lines 3--7) and the codes $X$ (line 8). As previously observed, minimizing over the $\ell_0$ sparsity set reduces to the computation of the hard-thresholding operator (see line 8). We further discuss the computation of the transform having the codes from the previous step.

\noindent The initialization of the decomposition for $W$ can be done by using the least-squares or Procrustes solutions to minimize the quantity $\| X - WY \|_F$. The first produces the unconstrained condition number solution while the latter reaches the perfectly conditioned solution (condition number one), i.e., an orthogonal $W$.


\begin{remark}
The steps of the proposed algorithm which update $U,\Sigma$, and $X$ solve exactly each sub-problem of the original problem and therefore guarantee that the objective function value decreases at each step. Unfortunately, the update of $V$ cannot be solved exactly (to the best of our knowledge), and therefore in this case an increase in the objective function value is possible. As such the proposed algorithm is not monotonically convergent, in general.
\end{remark}

\section{Projection for the constrained singular spectrum} \label{sec:spectrum_projection}

\noindent Given $\kappa \ge 1$ and $d,r \in \rset^n_+$, we consider the following problem of interest:
\begin{align*}
\min_{s} \quad \; & \norm{s - d}^2_2 \\
\text{subject to} \; & \frac{ \max_i {s_i r_i} }{\min_i {s_i r_i}}  \le  \kappa.
\end{align*}
In a nutshell, the problem seeks the best approximation of vector $d$ with a weighted $\kappa-$conditioned vector $s^*$. Trivially, when $\frac{ \max_i {d_ir_i} }{\min_i {d_ir_i}}  \le  \kappa$ then $s^* = d$. Also, if $\kappa = 1$ then $s^*$ is a multiple of the ones vector and the solution is that of the Procrustes optimization problem \cite{schonemann1966generalized}, i.e., $W$ is orthogonal.
An equivalent form of this problem is  \eqref{projection_problem}, restated here for simplicity:
\begin{align}\label{kappa_cond_problem_lowup}
\min_{s,l} \quad \; & \norm{s- d}^2_2 \\
\text{subject to} \; & lr_i \le s_i \le lr_i \kappa \quad \forall i \in \{1, \cdots, n\}.  \nonumber 
\end{align}


\noindent Particularly, if $r_i = 0$ then $s_i$ is constrained to values $0$. However, since \eqref{kappa_cond_problem_lowup} is not an usual projection problem onto a given convex set, an efficient algorithm is not directly visible under this formulation.
The next auxiliary result reformulates  problem \eqref{kappa_cond_problem_lowup} in a simpler form, that facilitates the application of finite direct procedures to solve it. Denote $r_{\min} = \min_i r_i$.

\begin{theorem}\label{lemma_aux}
Let $d \neq 0$ and assume $\frac{ \max_i {d_ir_i} }{\min_i {d_ir_i}}  >  \kappa > 1$. Define $A(l):=\{i: d_i \le lr_i \}, B(l):=\{i: d_i \ge \kappa r_i l\}$. The following relations hold:
\begin{itemize}
\item[$(i)$] The minimization problem \eqref{kappa_cond_problem_lowup} reduces to the following convex piecewise-quadratic minimization:
\begin{align*}
\min_l \; g(l): = \sum_{i \in A(l)} (d_i-r_il)^2 + \sum_{i \in B(l)} (d_i - \kappa r_i l)^2.
\end{align*}

\item[$(ii)$] The extremal points of function $g$ satisfies: 
\begin{align}
   l^* = \frac{\sum_{i \in A(l^*)} r_i d_i + \kappa \sum_{i \in B(l^*)} r_i d_i }{   \sum_{i \in A(l^*)} d_i^2 + \kappa^2 \sum_{i \in B(l^*)} d_i^2}
   \label{eq:loptimum}
\end{align}
 and $    g(l^\star) = \sum\limits_{i \in A(l^\star)} d_i^2 + \sum\limits_{j \in B(l^\star)} d_j^2 - \frac{ \left(\sum_{i \in A(l^*)} r_i d_i + \kappa \sum_{i \in B(l^*)} r_i d_i \right)^2 }{   \sum_{i \in A(l^*)} d_i^2 + \kappa^2 \sum_{i \in B(l^*)} d_i^2}.$
\item[$(iii)$] The function $g$ has a quadratic growth, i.e. the following inequality holds
\begin{align}\label{quadratic_growth}
g(l) - g(l^*) \ge \frac{1}{2}\min\left\{ \sum_{i \in A(l^*)} r_i^2 + \kappa^2 r_{\min}^2, r_{\min}^2 + \kappa^2 \sum_{i \in B(l^*)} r_i^2 \right\}(l-l^*)^2.
\end{align}
\item[$(iv)$] The function $g$ has a unique minimum $l$.
\end{itemize}
\end{theorem}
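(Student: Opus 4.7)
The plan is to dispatch items (i), (ii), (iv) as short calculations and to concentrate effort on the quadratic-growth bound (iii), which is the technical heart of the theorem.

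For (i), the outer problem in $s$ decouples coordinate-wise and each $s_i$ is simply the Euclidean projection of $d_i$ onto the interval $[lr_i,\kappa lr_i]$: if $d_i$ lies inside it contributes zero, while the two remaining cases $d_i\le lr_i$ and $d_i\ge \kappa lr_i$ are by definition $i\in A(l)$ and $i\in B(l)$, with $s_i^\star = lr_i$ and $s_i^\star = \kappa lr_i$ respectively; substituting back recovers $g(l)$ termwise. For (ii), on any interval of $l$ on which $A(l),B(l)$ are constant, $g$ is a plain convex quadratic, and termwise differentiation gives
\begin{align*}
g'(l) = -2\sum_{i\in A(l)} r_i(d_i-r_il) - 2\kappa\sum_{i\in B(l)} r_i(d_i-\kappa r_il);
\end{align*}
setting $g'(l^\star)=0$ yields the displayed formula for $l^\star$, and substituting $l^\star$ back into $g$ with the help of that same identity produces the stated value $g(l^\star)$.

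For (iii), I would first show $g$ is convex and $C^1$ on all of $\rset$ by writing each term as
\begin{align*}
h_i(l) = \max\{r_il-d_i,0\}^2 + \max\{d_i-\kappa r_il,0\}^2,
\end{align*}
a sum of compositions of the convex nondecreasing squared hinge with affine maps; each summand has vanishing derivative at its activation point, so $g$ is $C^1$ with piecewise-constant second derivative $g''(l) = 2(\sum_{i\in A(l)} r_i^2 + \kappa^2\sum_{i\in B(l)} r_i^2)$. Using $g'(l^\star)=0$ the growth bound rewrites as
\begin{align*}
g(l)-g(l^\star) = \int_{l^\star}^l (l-u)\,g''(u)\,du,
\end{align*}
after which I would lower-bound $g''(u)$ between $l^\star$ and $l$ by tracking which indices have joined $A$ or left $B$ en route: the two alternatives $\sum_{i\in A(l^\star)} r_i^2 + \kappa^2 r_{\min}^2$ and $r_{\min}^2 + \kappa^2\sum_{i\in B(l^\star)} r_i^2$ would correspond to the two sides $l>l^\star$ and $l<l^\star$, and the minimum combines them into a single two-sided inequality. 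The main obstacle here is precisely this bookkeeping: as $l$ moves away from $l^\star$, any single index first \emph{leaves} $B(l)$ (curvature drops by $\kappa^2 r_i^2$) and only later \emph{joins} $A(l)$ (curvature rises by $r_i^2$), so $g''$ is not monotone in $|l-l^\star|$ and identifying the correct piecewise minorant requires a careful case analysis of the sequence of break points crossed.

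Part (iv) is then immediate from (iii): after discarding the (decoupled, forced-zero) coordinates with $r_i=0$ one may assume $r_{\min}>0$, and the standing hypothesis $\max_i d_ir_i/\min_id_ir_i>\kappa$ forces at least one of $A(l^\star),B(l^\star)$ to be nonempty, so the coefficient multiplying $(l-l^\star)^2$ in the quadratic-growth bound is strictly positive and $g(l)>g(l^\star)$ for every $l\ne l^\star$.
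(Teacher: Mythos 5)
Your treatment of (i), (ii) and (iv) matches the paper's: coordinatewise projection of $d_i$ onto $[lr_i,\kappa lr_i]$ for (i), Fermat's rule applied to the piecewise-quadratic $g$ for (ii), and strict positivity of the quadratic-growth constant for (iv). One caution on (ii): setting your (correct) expression for $g'$ to zero gives $l^*=\bigl(\sum_{i\in A(l^*)}r_id_i+\kappa\sum_{i\in B(l^*)}r_id_i\bigr)/\bigl(\sum_{i\in A(l^*)}r_i^2+\kappa^2\sum_{i\in B(l^*)}r_i^2\bigr)$, whose denominator involves $r_i^2$ rather than the $d_i^2$ displayed in \eqref{eq:loptimum}; the displayed formula appears to be a typo, and your claim that stationarity ``yields the displayed formula'' silently endorses it instead of flagging the discrepancy.

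The genuine gap is in (iii), which you correctly identify as the heart of the theorem and then do not prove. Your Taylor identity $g(l)-g(l^*)=\int_{l^*}^{l}(l-u)g''(u)\,du$ and the formula $g''(u)=2\bigl(\sum_{i\in A(u)}r_i^2+\kappa^2\sum_{i\in B(u)}r_i^2\bigr)$ are both sound (the paper works instead with a strong-monotonicity lower bound on $g'$ combined with the mean value theorem, which is the first-order version of the same computation), but the entire content of (iii) is the lower bound on the curvature along the segment between $l^*$ and $l$, and you defer exactly that step to ``a careful case analysis of the sequence of break points crossed.'' The resolution is in fact two short observations, not a break-point enumeration. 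First, $A(\cdot)$ is nondecreasing and $B(\cdot)$ is nonincreasing as set-valued maps of $l$, so for $u>l^*$ one has $\sum_{i\in A(u)}r_i^2\ge\sum_{i\in A(l^*)}r_i^2$ and for $u<l^*$ one has $\kappa^2\sum_{i\in B(u)}r_i^2\ge\kappa^2\sum_{i\in B(l^*)}r_i^2$. Second, the hypothesis $\max_i(d_i/r_i)>\kappa\min_i(d_i/r_i)$ places every stationary point in the open interval from $\min_i(d_i/r_i)$ to $\max_i(d_i/r_i)/\kappa$ (the paper verifies $g'<0$ at the left end and $g'>0$ at the right end), and on that interval $A(u)$ always contains the index attaining the minimum of $d_i/r_i$ while $B(u)$ always contains the index attaining the maximum; these two memberships supply the floor terms $r_{\min}^2$ and $\kappa^2 r_{\min}^2$ in the two branches of the minimum in \eqref{quadratic_growth}. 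Your worry about non-monotonicity of $g''$ (an index leaving $B$ strictly before joining $A$) is accurate but immaterial: the estimate never requires $g''$ to be monotone, only the containment and nonemptiness facts above. Without these two observations your argument does not yet establish \eqref{quadratic_growth}, and consequently (iv) --- which you correctly reduce to (iii) --- is also left open.
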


\begin{proof}
First observe that, for a fixed $l$, the solution of \eqref{kappa_cond_problem_lowup} in $s$ is the projection of $d$ onto the lower-upper bounds $[lr,lr\kappa]$. Thus, for any $i$ we have:
\begin{align}\label{explicit_s}
s_i = 
\begin{cases}
lr_i,  & i \in A(l) \\ 
d_i,  & i \notin A(l) \cup B(l)  \\ 
\kappa lr_i,  & i \in B(l). 
\end{cases}
\end{align}
By replacing \eqref{explicit_s} into the objective of \eqref{kappa_cond_problem_lowup} we obtain $(i)$.

\noindent 
Notice that $g'(l) = 2\sum_{i \in A(l)} r_i(r_il-d_i) + 2\kappa \sum_{i \in B(l)} r_i (\kappa l r_i - d_i) = 2r^T[lr - d]_+ - 2\kappa^2 r^T \left[\frac{d}{\kappa} - lr \right]_+$. The Fermat Theorem implies straightforwardly result $(ii)$. 

\vspace{5pt}

\noindent For simplicity denote $(d/r)_{\max}:= \max_i {\frac{d_i}{r_i}}, (d/r)_{\min}:= \min_i {\frac{d_i}{r_i}}$. Due to our assumption $\frac{ (d/r)_{\max} }{(d/r)_{\min}}  >  \kappa$ it is obvious that $B((d/r)_{\min})$ and $A((d/r)_{\max}/\kappa)$ are nonempty. By observing $$g'((d/r)_{\min}) = 2\kappa \sum_{i \in B((d/r)_{\min})} r_i[\kappa (d/r)_{\min} r_i - d_i] < 0$$ 
and 
$$g'((d/r)_{\max}/\kappa) = 2\sum_{i \in A ((d/r)_{\max}/\kappa)} r_i \left[r_i \frac{(d/r)_{\max}}{\kappa} - d_i \right] > 0,$$ 
we conclude that the extremal points of $g$ lie in $((d/r)_{\min}, (d/r)_{\max}/\kappa)$. By taking into account the explicit form of $g'(\cdot)$ we further derive:
\begin{align}\label{prelim_lowbound}
& [g'(l_1) - g'(l_2)](l_1-l_2) \nonumber\\
& = 2\left \langle [l_1 r - d]_+ - [l_2 r - d]_+ + \kappa^2 \left[\frac{d}{\kappa} - l_2 r \right]_+ - \kappa^2\left[\frac{d}{\kappa} - l_1 r \right]_+, l_1 r - l_2 r  \right \rangle.
\end{align}
Now,  observe on one hand that $A(l_1)$ and $A(l_2)$ are the nonzero support sets of $ [l_1 r - d]_+$ and $[l_2 r - d]_+ $, respectively. This implies a first lower bound:
\begin{align}\label{lowbound1}
\left \langle [l_1 r - d]_+ - [l_2 r - d]_+, l_1 r - l_2 r  \right \rangle \ge  \min \left\{ \sum\limits_{i \in A(l_1)} r_i^2, \sum\limits_{i \in A(l_2)} r_i^2 \right\} (l_1-l_2)^2.
\end{align}
On the other hand, since $B(l_1)$ and $B(l_2)$ are the nonzero support sets of $ \left[\frac{d}{\kappa} - l_1 r \right]_+$ and $ \left[\frac{d}{\kappa} - l_2 r \right]_+$, respectively, a similar bound holds: 
\begin{align}\label{lowbound2}
\left \langle \left[\frac{d}{\kappa} - l_2 r \right]_+ - \left[\frac{d}{\kappa} - l_1 r \right]_+,l_1 r - l_2 r  \right \rangle \ge \min \left\{ \sum\limits_{i \in B(l_1)} r_i^2, \sum\limits_{i \in B(l_2)} r_i^2 \right\} (l_1-l_2)^2.
\end{align}
By substituting \eqref{lowbound1}-\eqref{lowbound2} in \eqref{prelim_lowbound} and by taking $l_2 = l^*$, we obtain the following strong monotonicity property:
\begin{align}\label{prelim_qg}
g'(l)(l-l^*) \ge  2\left( \min \left\{ \sum\limits_{i \in A(l)} r_i^2, \sum\limits_{i \in A(l^*)} r_i^2 \right\} + \kappa^2 \min \left\{ \sum\limits_{i \in B(l)} r_i^2, \sum\limits_{i \in B(l^*)} r_i^2 \right\} \right) (l-l^*)^2, \end{align}
for $l \in ((dr)_{\min}, (dr)_{\max}/\kappa)$.
Since $g$ is differentiable, the Mean Value Theorem (see \cite[Sec. 1.1.1, pg. 4]{Pol:87book}) is applicable and we further reach:
\begin{align*}
& g(l)  = g(l^*) + \int_0^1 \frac{1}{\tau} {g'(l^* + \tau(l-l^*)) \tau(l-l^*)} d\tau \\
& \overset{\eqref{prelim_qg}}{\ge} g(l^*) +  \int_0^1 \frac{2}{\tau} \left( \min \Bigg\{ \sum\limits_{i \in A(l^* + \tau(l-l^*))} r_i^2, \sum\limits_{i \in A(l^*)} r_i^2 \right\} \\
& \hspace{3cm} + \kappa^2 \min \left\{ \sum\limits_{i \in B(l^* + \tau(l-l^*))} r_i^2, \sum\limits_{i \in B(l^*)} r_i^2 \right\} \Bigg) \tau^2(l-l^*)^2   d\tau \\
& \ge g(l^*) + (l-l^*)^2 \min_{t \in [l,l^*]}  \min \left\{ \sum\limits_{i \in A(t)} r_i^2, \sum\limits_{i \in A(l^*)} r_i^2 \right\} + \kappa^2 \min \left\{ \sum\limits_{i \in B(t)} r_i^2, \sum\limits_{i \in B(l^*)} r_i^2 \right\}.
\end{align*}
Finally, notice that for $l_1 \le l_2$ then $A(l_1) \subseteq A(l_2)$ and $B(l_2) \subseteq B(l_2)$. This last observation allows the final bound:
\begin{align*}
\min_{t \in [l,l^*]}  \min \left\{ \sum\limits_{i \in A(t)} r_i^2, \sum\limits_{i \in A(l^*)} r_i^2 \right\} &+ \kappa^2 \min \left\{ \sum\limits_{i \in B(t)} r_i^2, \sum\limits_{i \in B(l^*)} r_i^2 \right\} \\
& \ge 
\begin{cases}
\sum\limits_{i \in A(t)} r_i^2 + \kappa^2 \sum\limits_{i \in B(l^*)} r_i^2, & \text{if} \; t < l^* \\
\sum\limits_{i \in A(l^*)} r_i^2 + \kappa^2 \sum\limits_{i \in B(t)} r_i^2 , & \text{if} \; t \ge l^* \\
\end{cases} \\
& \ge \min\left\{ \sum_{i \in A(l^*)} r_i^2 + \kappa^2 r_{\min}^2, r_{\min}^2 + \kappa^2 \sum_{i \in B(l^*)} r_i^2 \right\},
\end{align*}
we recover result $(iii)$.
\noindent The uniqueness statement of the fourth part $(iv)$ immediately yields from $(iii)$. Assume $l^*$ is not unique, i.e. there exists $l_1^*$ and $l_2^*$ such that $g(l_1^*) = g(l_2^*) = g^*$. By replacing into inequality of $(iii)$ we get:
\begin{align*}
0 \ge \min\left\{ \sum_{i \in A(l^*_1)} r_i^2 + \kappa^2 r_{\min}^2, r_{\min}^2 + \kappa^2 \sum_{i \in B(l^*_1)} r_i^2 \right\} (l_1^*-l_2^*)^2 > 0,
\end{align*}
which leads to a contradiction.
\end{proof}

\begin{remark}
Note that Theorem \ref{lemma_aux} reduces the minimization problem \eqref{kappa_cond_problem_lowup} to a smooth convex one-dimensional search. The main difficulty in the evaluation of $g(l)$ and $g'(l)$ is carried by the explicit computation of the index sets $\{A(l),B(l)\}$, at a cost of at most $\BigO(n^3)$, the cost for each of the two other steps of the proposed algorithm.

Note that the optimization problem in \eqref{kappa_cond_problem_lowup} can be equivalently restated as
\begin{align}\label{kappa_cond_problem_lowup_new}
\min_{s,l} \quad \; & \norm{ d \odot( s- r \oslash d  ) }^2_2 \\
\text{subject to} \; & l \le s_i \le l \kappa \quad \forall i \in \{1, \cdots, n\},  \nonumber
\end{align}
where we have defined $\odot$ as the elementwise multiplication and $\oslash$ is the elementwise division. We would like to point out several simplifications and properties. First, we assume $d_i > 0$ as this is a norm quantity, and if zero then the corresponding term in the objective function can be removed from the optimization problem. We can also assume that $r_i \geq 0$ because we can always change the signs of rows from $X$ in order to guarantee non-negative $r_i$ for all $i$. Second, note that $r \oslash d$ is the solution to the unconstrained optimization problem (unconstrained by the condition number $\kappa$ bound). Without loss of generality, we can assume that the non-negative elements in $r$ are ordered decreasingly (by reordering the rows in the matrices $X$ and $Y$).

The goal is to find the solution $s$ closest to $r \oslash d$ in the sense that as few entries of $r \oslash d$ as possible are modified in the optimization problem to obey the conditioning constraints in terms of $l$ and $\kappa$. If the entries of $r \oslash d$ are ordered, as assumed here, then the problem reduces to finding the sets $A(l)$ and $B(l)$ of the minimum size that obey the constraints. The obvious approach of searching among all possible pairs for the sets $A(l)$ and $B(l)$ leads to an algorithm whose complexity is $\BigO(n^2)$. This is the approach that we use in this paper.

We would like to note that written in this way and with the assumptions that we made, our optimization problem \eqref{kappa_cond_problem_lowup_new} is equivalent to the spectrum calculation problem as defined in \cite{LI2020190} and their proposed algorithm (Algorithm 2.1 of \cite{LI2020190}) might be adapted for our particular scenario to retrieve the best solution $s^\star$. The significant difference with the work \cite{LI2020190} is the new formula \eqref{eq:loptimum} for the calculation of $l^\star$ which minimizes the proposed objective function and the objective function itself which can be viewed as a weighted version of the objective function considered in \cite{LI2020190}. The complexity of this algorithm is $\BigO(n)$ but it is not clear how it can be extended to our approach. 

We would like to note that as the updates for $U$ and $V$ are both $\BigO(n^3)$ the computational benefit of having the spectrum update in $\BigO(n)$ is valuable but not significant in the overall running time of the proposed algorithm.

\end{remark}

\section{Experimental numerical results}
\label{sec:experiments}

In this section, we depict several numerical simulations on synthetic and real data to show-case our method and compare it to existing work.
Our algorithm implementation and its applications are public and available online~\footnote{\url{https://github.com/pirofti/ConditionedTransformLearning}}.

In all the experiments the goal is to design a transform $W$ that has the same properties (we refer here to the condition number and the Frobenius norm of $W$) as the ones designed with the method from \cite{RavBre:12}. In the plots, we refer to the results obtained via \cite{RavBre:12} as the \texttt{bresler} method. Therefore, in each case, we will choose a sparsity level $T$ and design a transform $W_\text{bresler}$ using the \texttt{bresler} method and then apply our algorithm with both the target condition number and the target Frobenius norm of $W_\text{bresler}$ and measure the representation error. We perform the experiments in this way in order to be able to appropriately compare the transforms (compare representation error achieved for the same condition number and Frobenius norm).

All methods are initialized with the same orthogonal transformation $D \otimes D$, where $D$ is the Discrete Cosine Transform matrix of size $\sqrt{n} \times \sqrt{n}$ and $\otimes$ is the Kronecker product.

\subsection{Numerical results on synthetic data}

We show the performance of the proposed algorithm in terms of the objective function described, as compared to the transform learning approach from \cite{RavBre:12}, on image data. We extract $8 \times 8$ non-overlapping patches, subtract their means, and vectorize them, i.e., $n = 64$, using popular test images such as barbara, peppers, and lena. Our dataset $Y$ consists of a concatenation of all these vectorized patches and therefore the size of the dataset is $m = 12288$. This setup is well known and standard in the image processing literature \cite{EladAharon06_denoising}.

Again following the work in \cite{RavBre:12}, we choose to show two representation errors, the regular and the normalized one, like:
\begin{equation}
    \| X - WY \|_F \text{ and } \| X - WY \|_F \| WY \|_F^{-1}.
\end{equation}

Experimental results for sparsity levels $T \in \{6, 8\}$ are shown in Figures \ref{fig:plotab} and \ref{fig:plotcd}, respectively. The \texttt{bresler} method runs with parameters: for the log-determinant regularization we have the weight $\mu \in \{  2.1\times 10^{-5}, 2.1\times 10^{-6}, 2.1\times 10^{-8}, 10^{-9} \} \times \| WY \|_F^2$ while for the Frobenius norm regularization we have the weight $\rho = \mu$. We chose this range in order to reach a wide range of condition numbers for our four transforms: from a few tens to a few tens of thousands. Experimental results show that the proposed method outperforms the \texttt{bresler} approach in the scenarios we consider. While the convergence of our method is slower in some cases we do ultimately reach lower regular and normalized representation error values. Overall, as expected, better representation error is achieved when the condition number of $W$ is larger (mimicking the behavior of the least-squares solution whose condition number is not constrained) and the allowed sparsity level is higher.

\begin{figure}
\centering
\begin{subfigure}[b]{0.75\textwidth}
  \centering
  \includegraphics[width=\linewidth]{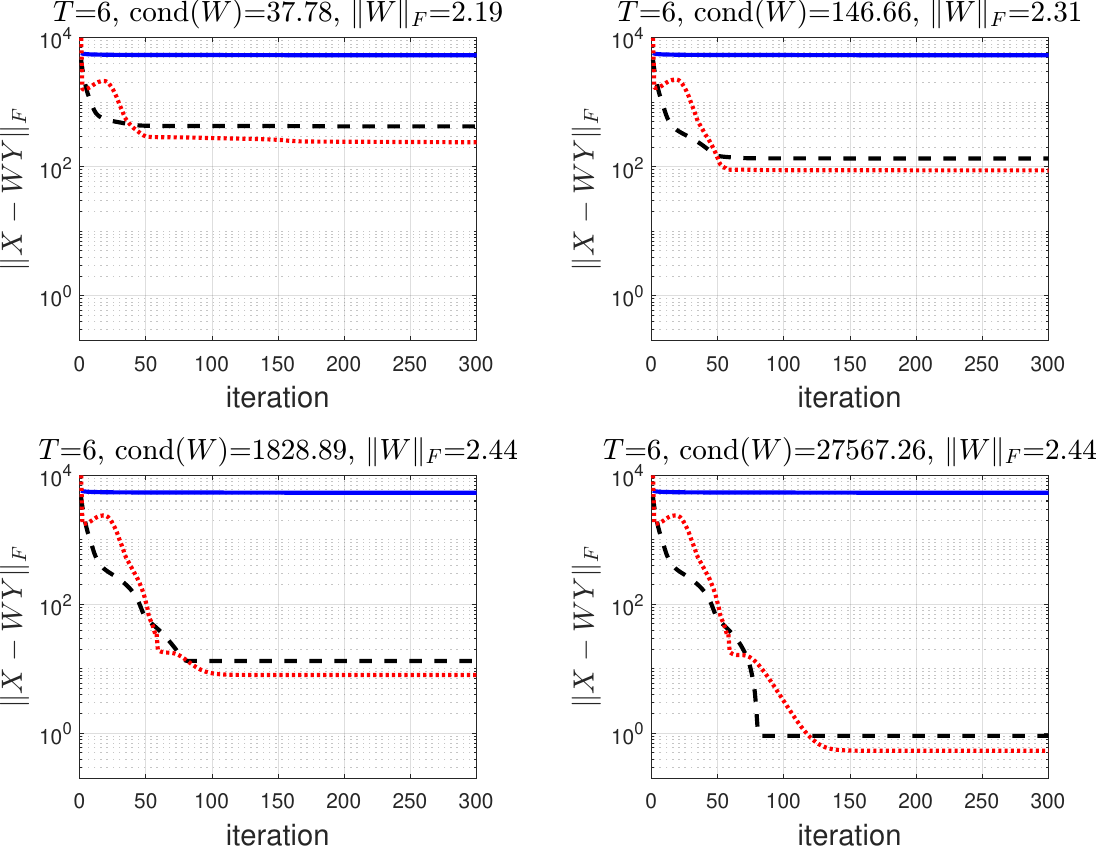}
  \caption{Representation error achieved for $T = 6$ and various condition numbers and Frobenius norms.}
  \label{fig:subab1}
\end{subfigure}
\begin{subfigure}[b]{0.75\textwidth}
  \centering
  \includegraphics[width=\linewidth]{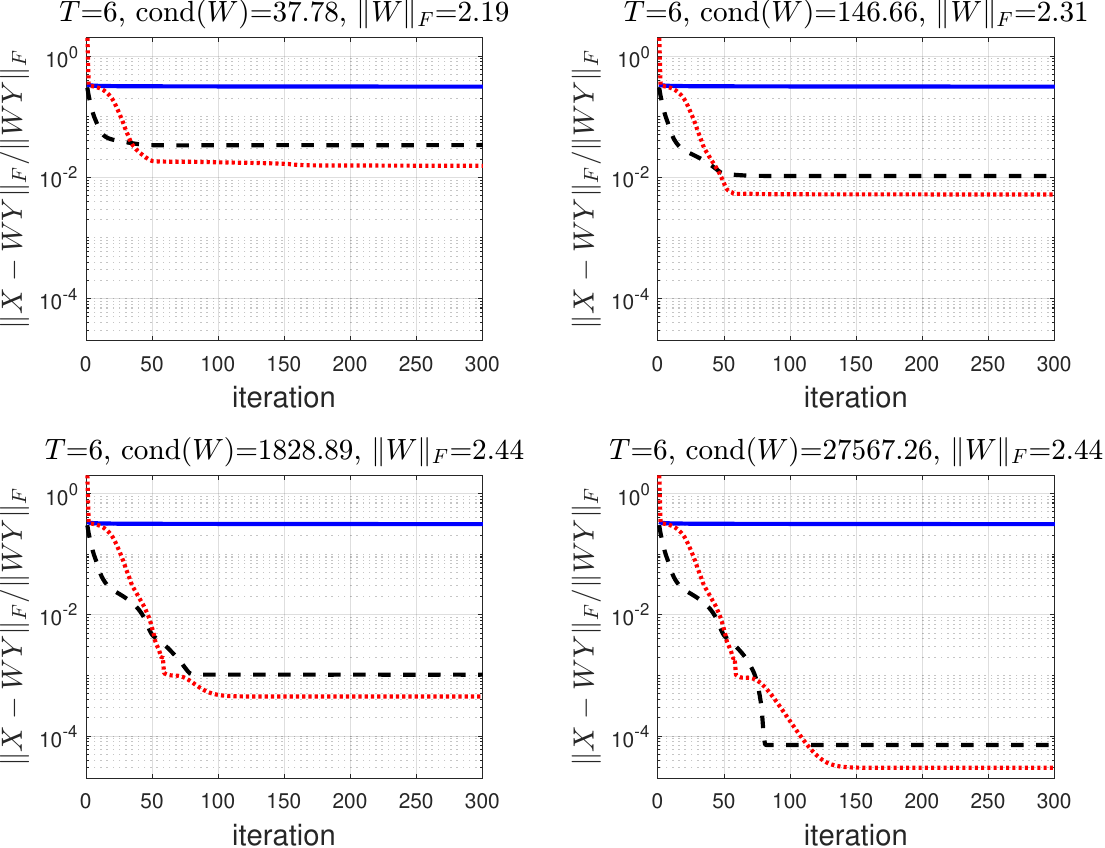}
  \caption{Normalized representation error achieved for $T = 6$ and various condition numbers and Frobenius norms.}
  \label{fig:subab2}
\end{subfigure}
\caption{Evolution of the regular and normalized representation errors for 300 iterations with the sparsity level $T = 6$ for the \texttt{bresler} method (in the dashed black lines) and the proposed method (in the dotted red lines). For perspective, we show the Procrustes solution as well (the continuous blue line).}
\label{fig:plotab}
\end{figure}

\begin{figure}
\centering
\begin{subfigure}[b]{0.75\textwidth}
  \centering
  \includegraphics[width=\linewidth]{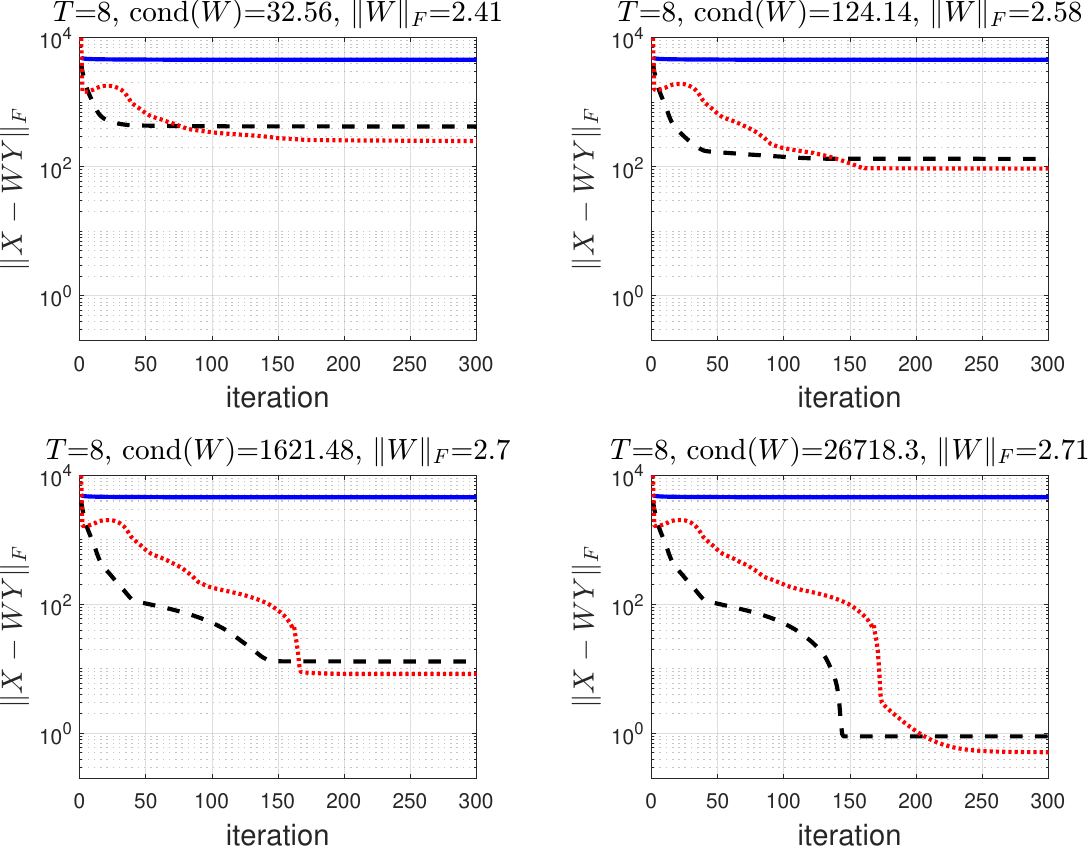}
  \caption{Representation error achieved for $T = 8$.}
  \label{fig:subcd1}
\end{subfigure}
\begin{subfigure}[b]{0.75\textwidth}
  \centering
  \includegraphics[width=\linewidth]{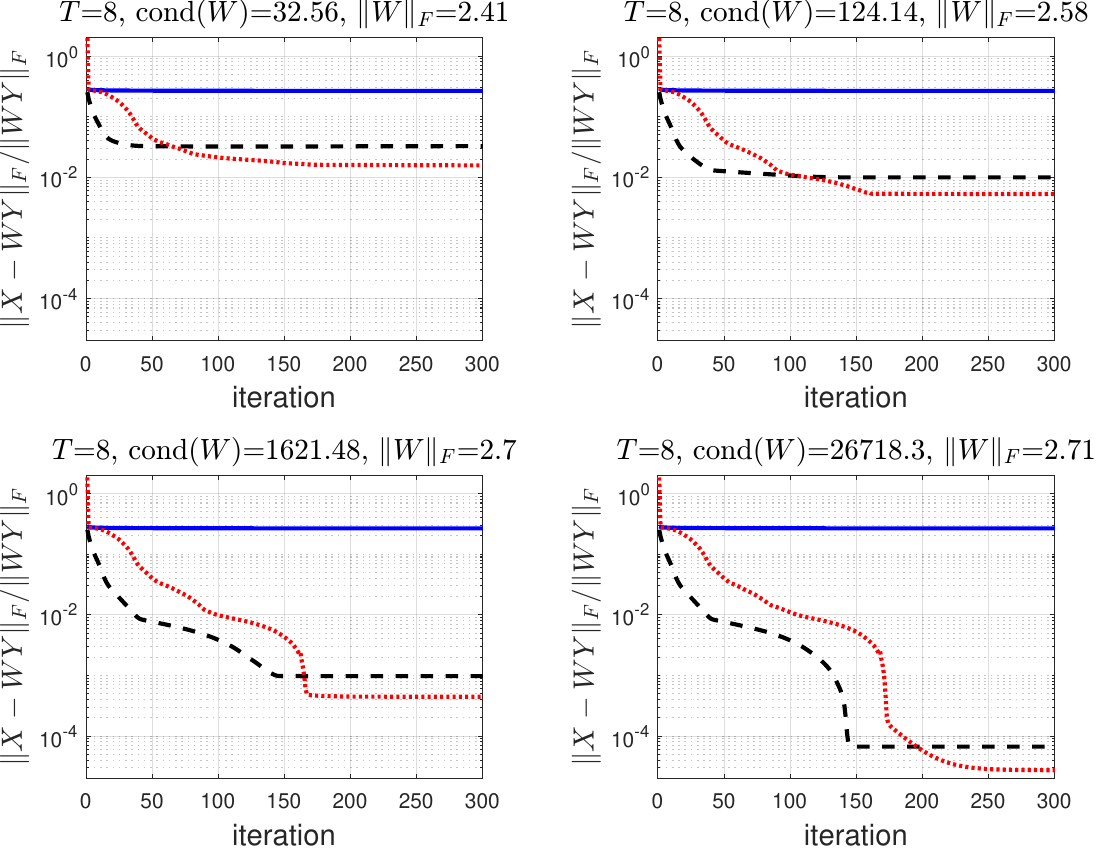}
  \caption{Normalized representation error for $T = 8$.}
  \label{fig:subcd2}
\end{subfigure}
\caption{Analogous to Figure \ref{fig:plotab} but for the sparsity level $T = 8$.}
\label{fig:plotcd}
\end{figure}

\subsection{Numerical results on image data}


Notice that in several denoising and signal processing applications, an optimal conditioning number might be desired to attain the practical transform properties.


We perform denoising tests following the methodology, implementation, and data provided in \cite{RB15_SparseTransoformDenoising}
and additional data from \cite{RB13_DoubleSparseImages}.
We thank the authors for making their code and data public and for encouraging reproducible research.
All clean and noisy images were used as provided by the original authors.
Noisy data was for all images at $\sigma_\text{noise} = \{5, 10, 15, 20, 100\}$ levels.
We took
Barbara, Cameraman, Couple, and Brain from~\cite{RB15_SparseTransoformDenoising}
and 
Hill, Lena, Man from \cite{RB13_DoubleSparseImages}.

Our experiments include three methods:
our proposed method from Algorithm~\ref{alg:rip},
the \texttt{bresler} method from~\cite{RavBre:12} with the setup described in \cite{RB15_SparseTransoformDenoising}
(which partially relies on the setup from \cite{RB13_DoubleSparseImages})
and the Procrustes-based orthogonal transform learning method which we will call \texttt{Ortho} from now on.

Denoising starts by vectorizing overlapping image patches of size $n = p \times p$ from the provided noisy image.
The resulting patches are arranged in matrix $Y\in\rset^{n \times m}$.
This dataset is provided for all tests. Adapted from \cite{RB15_SparseTransoformDenoising}, our denoising transform learning problem is:
\begin{align*}
\min\limits_{W,X,\hat{Y}} \quad \; & \norm{W \hat{Y}-X}^2_F + \beta\norm{Y-\hat{Y}}^2_F \\
\text{subject to} \;\; & \norm{W}_F = \tau, \; \kappa(W)\le  \rho, \; \norm{X_i}_0 \le s \quad \forall i,  \nonumber
\end{align*}
where $\hat{Y}$ is the cleaned signal set.
The objective follows that of $\eqref{approx_relaxed_constnorm_model}$
where the regularization term was added
to control the distance between the noisy and clean
matrices through the $\beta$ parameter.
We solve this through alternating optimization.
First, we compute the optimal transform $W^*$,
where $X$ and $\hat{Y}$ are fixed,
with respect to the particular formulation of each algorithm:
\eqref{approx_relaxed_constnorm_model} for the proposed scheme,
Procrustes' $W=UV^T$ where $YX^T = U\Sigma V^T$,
and, respectively, by (P3) from \cite{RB15_SparseTransoformDenoising} for \texttt{bresler}.
Then, using the obtained optimal transform, all the algorithms compute optimal $X^*$ through hard-thresholding the columns of $W^*\hat{Y}$.
In our experiments, we follow the much more versatile variable sparsity update strategy described when solving (P4) in \cite{RB13_DoubleSparseImages},
such that representation is performed in a loop where the sparsity is increased until a given threshold error is achieved $\varepsilon = C \sqrt{n} \sigma_\text{noise}$.
Finally $\hat{Y}$ is obtained by keeping $W^*$ and $X^*$
fixed and solving the resulting least-squares problem.
We repeat these steps until convergence is obtained.
After the learning iterations, the final step performs pixel averaging from the overlapping patches to restore a smooth cleaned image.
These steps are presented in great detail in \cite{RB15_SparseTransoformDenoising},
here we follow them exactly and just replace the transform learning method where necessary.

For the \texttt{bresler} method we use the same parametrization as the one recommended in the implementation and Table~I from \cite{RB15_SparseTransoformDenoising}:
we use $11\times 11$ patches with a stride of 1 pixel, with $I=20$ learning iterations and $T=12$ inner transform learning iterations.
We
set $C=1.15$ as per \cite{EladAharon06_denoising}. 
All methods are initialized with the same data and parameters values where applicable.
As earlier described,
our method has extra constraints based on the results obtained by the \texttt{bresler} method.

Table~\ref{tab:denoising_p11} depicts the obtained results.
We compare denosining quality via peak signal-to-noise ratio (PSNR) and the structural similarity index (SSIM)~\cite{WBSS04_ssim}
between the original and cleaned images.
The first column represents the noise levels and the 
PSNR between the original and noisy images.
For the reader's convenience, we mark the best results in each image experiment per each quality measure.
While the proposed method performs better most times,
and always better than \texttt{bresler} method,
it comes as a surprise how well the Ortho method performs in the denoising scenario.
Because the choice of $p=8$ patches is
often found in the literature
we performed an identical set of tests as above with only this parameter change and arrived at similar results as those in Table~\ref{tab:denoising_p11}.

\begin{table*}
  
\tabcolsep 2.7pt
\caption{Denoising PSNR(dB) and SSIM for standard images ($p=121$, $C=1.15$)}
\label{tab:denoising_p11}
\footnotesize
\bc \bt{c c c c c c c c c c c c c c c c}

\hline
\multirow{2}{*}{\shortstack[l]{$\sigma_\text{noise}$ \\/ PSNR}} &
\multirow{2}{*}{Method}

&\multicolumn{2}{c}{barbara}
&\multicolumn{2}{c}{cameraman}
&\multicolumn{2}{c}{couple}
&\multicolumn{2}{c}{hill}
&\multicolumn{2}{c}{lena}
&\multicolumn{2}{c}{man}
&\multicolumn{2}{c}{brain}
\\

& &
PSNR & SSIM &   
PSNR & SSIM &
PSNR & SSIM &
PSNR & SSIM &
PSNR & SSIM &
PSNR & SSIM &
PSNR & SSIM \\
\hline

\multirow{3}{*}{\shortstack[c]{$5$ /\\ $34.15$}}
& Proposed & 37.63 & \textbf{0.987} & 37.31 & 0.953 & 36.91 & 0.984 & \textbf{36.73} & \textbf{0.982} & 38.05 & 0.981 & \textbf{36.37} & 0.992 & \textbf{42.00} & 0.985 \\
& Bressler & 37.67 & 0.986 & \textbf{37.32} & 0.953 & 36.90 & 0.984 & 36.64 & 0.981 & 38.05 & 0.981 & 36.36 & 0.992 & 41.88 & 0.985 \\
& Ortho & \textbf{37.68} & 0.986 & \textbf{37.32} & 0.953 & \textbf{36.93} & 0.984 & 36.67 & 0.981 & \textbf{38.07} & 0.981 & \textbf{36.37} & 0.992 & 41.96 & 0.985 \\
\hline
\multirow{3}{*}{\shortstack[c]{$10$ /\\ $28.13$}}
& Proposed & 33.20 & 0.966 & 33.01 & 0.915 & 32.67 & \textbf{0.953} & \textbf{32.73} & \textbf{0.944} & \textbf{34.70} & \textbf{0.960} & \textbf{32.32} & \textbf{0.975} & \textbf{37.57} & 0.960 \\
& Bressler & 33.35 & 0.967 & 33.01 & 0.915 & 32.63 & 0.951 & 32.45 & 0.938 & 34.47 & 0.958 & 32.16 & 0.974 & 37.32 & 0.960 \\
& Ortho & \textbf{33.49} & \textbf{0.968} & \textbf{33.04} & \textbf{0.916} & \textbf{32.75} & \textbf{0.953} & 32.64 & 0.941 & 34.55 & 0.958 & 32.22 & 0.975 & 37.48 & 0.960 \\
\hline
\multirow{3}{*}{\shortstack[c]{$15$ /\\ $24.58$}}
& Proposed & \textbf{30.70} & 0.943 & \textbf{30.58} & \textbf{0.877} & \textbf{30.44} & \textbf{0.917} & 30.34 & 0.892 & \textbf{32.57} & \textbf{0.936} & \textbf{30.16} & \textbf{0.954} & \textbf{34.98} & 0.927 \\
& Bressler & 30.66 & 0.943 & 30.44 & 0.875 & 30.23 & 0.911 & 30.11 & 0.884 & 32.36 & 0.934 & 29.80 & 0.948 & 34.45 & 0.927 \\
& Ortho & 30.65 & 0.943 & 30.55 & \textbf{0.877} & 30.40 & 0.915 & \textbf{30.44} & \textbf{0.894} & 32.48 & 0.935 & 29.95 & 0.952 & 34.89 & \textbf{0.929} \\
\hline
\multirow{3}{*}{\shortstack[c]{$20$ /\\ $22.12$}}
& Proposed & \textbf{28.90} & \textbf{0.915} & \textbf{28.82} & \textbf{0.840} & 28.77 & \textbf{0.879} & 28.90 & \textbf{0.844} & \textbf{31.03} & \textbf{0.912} & 28.44 & \textbf{0.928} & \textbf{32.93} & \textbf{0.901} \\
& Bressler & 28.73 & 0.913 & 28.55 & 0.835 & 28.43 & 0.869 & 28.63 & 0.830 & 30.80 & 0.908 & 28.10 & 0.916 & 32.38 & 0.897 \\
& Ortho & 28.79 & 0.914 & 28.67 & 0.838 & \textbf{28.80} & 0.878 & \textbf{28.91} & 0.843 & \textbf{31.03} & \textbf{0.912} & \textbf{28.46} & 0.925 & 32.91 & \textbf{0.901} \\
\hline
\multirow{3}{*}{\shortstack[c]{$100$ /\\ $8.11$}}
& Proposed & \textbf{21.35} & \textbf{0.616} & \textbf{20.76} & \textbf{0.603} & \textbf{21.99} & \textbf{0.539} & 23.64 & 0.583 & \textbf{23.94} & \textbf{0.695} & \textbf{22.26} & \textbf{0.670} & \textbf{23.98} & \textbf{0.536} \\
& Bressler & 21.20 & 0.608 & 20.20 & 0.581 & 21.91 & 0.535 & \textbf{23.65} & 0.583 & 23.69 & 0.686 & 22.07 & 0.663 & 23.63 & 0.526 \\
& Ortho & 21.22 & 0.609 & 20.47 & 0.592 & 21.92 & 0.535 & \textbf{23.65} & 0.583 & 23.72 & 0.687 & 22.09 & 0.663 & 23.65 & 0.527 \\
\hline

\et \ec
\end{table*}

  






\section{Conclusions}
\label{sec:conclusion}

In this paper, we proposed a new alternating optimization algorithm to learn well-conditioned transformations for sparse representations. The novelty of the proposed method lies in the way we can explicitly upper bound the condition number in all the steps of the method. Each step of the proposed algorithm is solved by numerically efficient procedures based on Procrustes and constrained convex one-dimensional minimization problems. Numerical results show that we outperform previous methods from the literature on synthetic and image-denoising scenarios.

In the future, we plan to analyze and adapt the proposed algorithm for
the large data setting,
where the polar decomposition and singular value decomposition computational costs become prohibitive,
and also for the online formulation,
where the full dataset is provided
sequentially in mini-batches
and the transform has to be updated accordingly.

\bibliographystyle{plain}
\bibliography{bcd}

\end{document}